\def\blfootnote{\xdef\@thefnmark{}\@footnotetext}
\newtheorem{thm}{Theorem}[section]
\newtheorem{lem}[thm]{Lemma}
\newtheorem{prop}[thm]{Proposition}
\theoremstyle{definition}
\theoremstyle{remark}
\newfont{\eufm}{eufm10}
\newcommand{\e }{\varepsilon }
\renewcommand{\phi }{\varphi}
\renewcommand{\kappa }{\varkappa}
\newcommand{\Hl }{\{ H_\lambda \} _{ \lambda \in \Lambda  }}
\renewcommand{\t }{{\rm Th}_{\forall} }
\begin{document}

\title{On the universal theory of torsion and lacunary hyperbolic groups}
\author{D. Osin \thanks{This work has been supported by the NSF grant DMS-0605093. }}
\date{}
\maketitle

\begin{abstract}
We show that the universal theory of torsion groups is strongly contained in the universal theory of finite groups. This answers a question of Dyson. We also prove that the universal theory of some natural classes of torsion groups is undecidable. Finally we observe that the universal theory of the class of hyperbolic groups is undecidable and use this observation to construct a lacunary hyperbolic group with undecidable universal theory. Surprisingly, torsion groups play an important role in the proof of the latter results.
\bigskip

\noindent \textbf{Keywords:} Universal theory, torsion group, lacunary hyperbolic group.

\medskip

\noindent \textbf{2000 Mathematical Subject Classification: } 20F50, 03C60, 20F65,
20F67.
\end{abstract}

\section{Introduction}

Recall that a {\it universal sentence } in a first order language $\mathcal L$ is any sentence of the form
$$
\forall x_1\ldots \forall x_k \, \Phi,
$$
where $\Phi $ is a quantifier free formula. The {\it universal theory} of a class of groups $\mathcal C$ is defined to be the set of all universal sentences in the first order group theoretic language that hold in all groups from $\mathcal C$. As usual, we denote the universal theory of $\mathcal C$ by by $\t (\mathcal C)$.

Clearly if $\mathcal C$ contains another class, say $\mathcal D$, then $\t (\mathcal C)\subseteq \t (\mathcal D)$. In particular,
\begin{equation}\label{tf}
\t(\mathcal T)\subseteq\t(\mathcal F),
\end{equation}
where $\mathcal T$ and $\mathcal F$ are the classes of torsion groups and finite groups, respectively. In \cite{Dys}, Dyson asked whether the inclusion in (\ref{tf}) is actually an equality. The positive answer would imply some surprising results, e.g., the absence of finitely presented infinite groups of finite exponent, as observed by  Fine, Gaglione, and Spellman \cite{FGS}. Our first goal is to answer this question.

\begin{thm}\label{tor}
We have $\t (\mathcal T) \subsetneqq \t(\mathcal F)$.
\end{thm}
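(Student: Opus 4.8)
The inclusion $\t(\mathcal T)\subseteq\t(\mathcal F)$ is already recorded in (\ref{tf}), so the plan is to exhibit a single universal sentence lying in $\t(\mathcal F)\setminus\t(\mathcal T)$. I would look for one of the special shape
\[
\sigma:=\forall x_1\cdots\forall x_k\bigl((u_1=1\wedge\dots\wedge u_m=1)\Rightarrow v=1\bigr),
\]
where $u_1,\dots,u_m,v$ are words in $x_1,\dots,x_k$. Such a $\sigma$ is governed entirely by the finitely presented group $G=\langle x_1,\dots,x_k\mid u_1,\dots,u_m\rangle$ and the element $v\in G$ it names: $\sigma$ holds in a group $H$ precisely when every homomorphism $G\to H$ kills $v$. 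Thus $\sigma\in\t(\mathcal F)$ amounts to (b) $v=1$ in every finite quotient of $G$, while $\sigma\notin\t(\mathcal T)$ amounts to (a) $v\neq 1$ in some torsion quotient of $G$. The task therefore reduces to constructing a finitely presented $G$ and a nontrivial $v$ satisfying (a) and (b). It is worth noting that a universal sentence holds in all finite groups if and only if it holds in all residually finite groups — a failure in a residually finite group survives, after simultaneously separating the finitely many inequations, in a finite quotient — so $\t(\mathcal F)$ equals the universal theory of residually finite groups, the torsion group in (a) is necessarily \emph{not} residually finite, and (b) says exactly that $v$ lies in the finite residual of $G$, the kernel of $G\to\widehat G$.

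The engine supplying a torsion group with a nontrivial finite residual is the Burnside machinery. By the positive solution of the restricted Burnside problem (Zelmanov), the free Burnside group $B(m,n)$ has a \emph{finite} largest finite quotient $B_0(m,n)$; by the infiniteness of free Burnside groups for suitable $m$ and large odd $n$ (Novikov--Adian, and Olshanskii/Ivanov), $B(m,n)$ itself is infinite. Hence the finite residual $R=\ker\bigl(B(m,n)\to B_0(m,n)\bigr)$ is a nontrivial (indeed infinite, finite-index) normal subgroup, every element of which maps to $1$ in every finite quotient of $B(m,n)$. I would take the torsion quotient required in (a) to be $B(m,n)$ and let $v$ represent a fixed nontrivial element of $R$.

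The remaining — and main — difficulty is to repackage this infinitely presented situation inside a \emph{finitely} presented group. Concretely, I need a finitely presented $G$ that surjects onto $B(m,n)$ (so that $v$ stays nontrivial in the torsion quotient, giving (a)) and yet whose finite quotients all factor through $B_0(m,n)$ (so that $v\in R$ is killed in every finite quotient, giving (b)); equivalently, $\widehat G$ should be finite while $G$ surjects onto an infinite torsion group. This is the crux, because neither ``being torsion'' nor ``having finite profinite completion'' is finitely axiomatizable: imposing finitely many Burnside relations $w^n=1$ does not force the relevant subgroups to have exponent $n$, and a group assembled this way will in general acquire spurious finite quotients in which $v$ revives, destroying (b).

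I expect this step to demand a dedicated construction rather than a soft argument — for example graded or relative small-cancellation theory over the Burnside group $B(m,n)$, or a Higman-type embedding — engineered so that the only finite quotients introduced are those already visible in the finite group $B_0(m,n)$. Once such a $G$ is in hand, conditions (a) and (b) hold by design, the sentence $\sigma$ separates the two theories, and $\t(\mathcal T)\subsetneq\t(\mathcal F)$ follows.
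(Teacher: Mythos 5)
Your reduction is the right one and matches the paper's: a sentence $\forall \bar x\,(u_1=1\,\&\,\cdots\,\&\,u_m=1\Rightarrow v=1)$ separates $\t(\mathcal F)$ from $\t(\mathcal T)$ as soon as one produces a finitely presented group $G$ and an element $v$ lying in the finite residual of $G$ but surviving in some torsion quotient of $G$; and the source of such a $v$ is indeed the nontrivial finite residual of a free Burnside group $B(m,n)$, guaranteed by Zelmanov's solution of the restricted Burnside problem together with Novikov--Adian/Ivanov. But the proof stops exactly at the point where the actual work lies: you acknowledge that the passage to a \emph{finitely presented} $G$ ``demands a dedicated construction'' and do not supply one, so the key existence statement is asserted rather than proved. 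That is a genuine gap --- without it there is no universal sentence and no theorem. The paper fills it by quoting the Olshanskii--Sapir embedding theorem (later simplified by Ivanov): for odd $n>2^{48}$, $B(m,n)$ embeds as a $Q$-subgroup of a finitely presented group $G$ in such a way that the natural map $G\to G/G^n$ is injective on $B(m,n)$.

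It is also worth noting that your proposed specification of the needed group is stronger than necessary and points in a slightly wrong direction. You ask for $G$ to \emph{surject} onto $B(m,n)$ and for $\widehat G$ to be finite (all finite quotients factoring through the maximal finite quotient of $B(m,n)$). Neither is needed. It suffices that $B(m,n)$ be a \emph{subgroup} of $G$: then the image of $B(m,n)$ in any finite quotient of $G$ is a finite quotient of $B(m,n)$, so any $w$ in the finite residual of $B(m,n)$ automatically dies in every finite quotient of $G$, with no control whatsoever over $\widehat G$. The torsion quotient witnessing failure is then not $B(m,n)$ itself but the exponent-$n$ group $G/G^n$, and the only delicate point --- the one the Olshanskii--Sapir theorem is engineered to provide --- is that $B(m,n)$ injects into $G/G^n$, so that $w$ remains nontrivial there. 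Reformulating the target this way is what makes the construction attainable.
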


Further we show that the universal theory of some natural classes of torsion groups is undecidable. Recall that a theory $T$ in a language $\mathcal L$ is {\it decidable}, if there is an algorithm which, given a sentence $\Phi $ in $\mathcal L$, decides whether $\Phi \in T$. In \cite{Slo}, Slobodskoi proved that the universal theory of all finite groups is undecidable. In fact, he also proved that the universal theory of all torsion groups is undecidable although it is not stated explicitly in \cite{Slo}. We recover the later result and obtain some new ones.

\begin{thm}\label{und} The universal theory of the following classes of groups is undecidable.
\begin{enumerate}
\item[a)] (Slobodskoi) The class of torsion groups.
\item[b)] The class of groups of any fixed sufficiently large odd non-simple exponent.
\item[c)] The class of $p$-groups for every fixed prime $p\ne 2$.
\end{enumerate}
\end{thm}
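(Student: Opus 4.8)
The plan is to replace universal sentences by systems of equations and inequations, use Slobodskoi's theorem as the source of undecidability, and then transport the undecidable instances into each target class by a periodic-quotient construction.

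\textbf{Reduction to satisfiability of systems.} For any class $\mathcal K$ of groups, $\t(\mathcal K)$ is decidable if and only if there is an algorithm that, given a finite system of equations $w_i(\bar x)=1$ together with inequations $v_j(\bar x)\ne 1$, decides whether the system is satisfiable in some group of $\mathcal K$. This is because every universal sentence is effectively equivalent to a finite conjunction of sentences of the form $\forall\bar x\,\bigl(\bigwedge_i w_i=1\to\bigvee_j v_j=1\bigr)$, whose negation is precisely such a system; thus membership in $\t(\mathcal K)$ is the non-satisfiability of the associated systems, and the translation is algorithmic in both directions. I would therefore work throughout with the satisfiability problem. Note that since $\mathcal F\subseteq\mathcal T$ and universal sentences pass to subgroups, any system satisfiable in a finite group is automatically satisfiable in a torsion group, so the difficulty always lies in the reverse direction for the hard instances.

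\textbf{The variety cases (b) and (c).} For a variety $\mathcal V$, for instance the Burnside variety $\mathcal B_n$ of groups of exponent $n$, satisfiability simplifies decisively: the system $\bigwedge_i w_i=1\wedge\bigwedge_j v_j\ne 1$ is satisfiable in $\mathcal V$ if and only if every $v_j$ is nontrivial in the relatively free group $G=\langle\bar x\mid w_i\rangle_{\mathcal V}$, since $G\in\mathcal V$ is then itself a witness and any homomorphism from $G$ can only kill more words. Hence $\t(\mathcal B_n)$ is decidable if and only if the word problem is uniformly decidable over finitely presented groups of exponent $n$, and the case of $p$-groups reduces the same way using exponents $p^k$. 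So the task for (b) and (c) becomes: produce a recursive family of finitely presented groups of the prescribed periodicity with uniformly undecidable word problem.

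\textbf{The construction and the main obstacle.} The heart of the proof is to graft Slobodskoi's undecidable family onto the periodic world. Starting from the finite data underlying Slobodskoi's undecidable instances, I would invoke Olshanskii's theory of graded small cancellation — the same machinery that establishes infiniteness of the free Burnside groups of large odd exponent and underlies lacunary hyperbolic groups — to build, algorithmically, groups of exponent $n$ (respectively $p$-groups) that contain the finite groups occurring in Slobodskoi's witnesses and in which the distinguished words $v_j$ survive exactly when the original system is satisfiable over $\mathcal F$. Part (a) then comes out as the torsion instance of the same scheme. The \textbf{main obstacle} is controlling torsion without collapse: one must impose the law $x^n=1$ (or $p$-primary torsion) on the generators through a small-cancellation quotient while guaranteeing that the witnessing inequations are not accidentally trivialized, i.e.\ that a periodic solution cannot arise where no finite one exists. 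This is precisely the step where the Adian–Olshanskii periodic-quotient technology is needed, and it is what forces the hypotheses "sufficiently large odd non-simple exponent" and "$p\ne 2$," these being exactly the regimes in which the required infinite periodic quotients are available and the imposed exponent can be shown not to destroy the encoded undecidable information.
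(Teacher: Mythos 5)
Your reductions at the start are sound (and for the variety $\mathfrak B_n$ the passage to the relatively free group $\langle \bar x\mid w_i\rangle_{\mathfrak B_n}$ correctly turns decidability of $\t(\mathfrak B_n)$ into the uniform word problem for groups finitely presented \emph{inside} $\mathfrak B_n$), but the paragraph you label ``the main obstacle'' is not an obstacle to be overcome later --- it is the entire content of the theorem, and the mechanism you propose for it would fail. You want to transport Slobodskoi's undecidable family into exponent $n$ by building, via graded small cancellation, ``groups of exponent $n$ that contain the finite groups occurring in Slobodskoi's witnesses.'' Those finite witnesses are arbitrary finite groups of unbounded exponent, and a finite group with an element of order not dividing $n$ cannot embed in any group of exponent $n$; so this transport, as described, is impossible. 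What is actually needed is an independent source of undecidability inside the Burnside variety itself, and the paper imports it as a black box: Kharlampovich's theorem that for $n=pq$ ($p\ge 3$ prime, $q$ with an odd divisor $\ge 665$) there is a finitely normally generated $N\lhd B(m,n)$ with $B(m,n)/N$ having undecidable word problem. This is a deep standalone result proved by methods unrelated to Slobodskoi's, and it is exactly what justifies the hypotheses ``odd, non-prime, sufficiently large'' in (b) and ``$p\ne 2$'' in (c) (take $n=p^k$, $k\ge 2$). Without citing it or an equivalent, your proof has no engine.

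A second, structural gap: your relatively-free-group reduction only applies to varieties, i.e.\ to part (b). The class of all torsion groups in (a) and the class of $p$-groups in (c) are not varieties (a finitely generated $p$-group need not have bounded exponent), so ``reduces the same way using exponents $p^k$'' does not go through: satisfiability in some $p$-group is not obviously equivalent to satisfiability in some group of exponent $p^k$. The paper resolves this with a single argument (Theorem \ref{und1}) covering any class $\mathcal C\supseteq\mathfrak B_n$: it uses the Olshanskii--Sapir/Ivanov embedding of $B(m,n)$ as a $Q$-subgroup of an honestly finitely presented group $G$, so that the sentence $L(V)$ is built from a genuine finite presentation of $G_1=G/\langle N\rangle^G$. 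Then one direction (if $L(V)\in\t(\mathcal C)$, then $V=1$ in $B(m,n)/N$) is tested against the single exponent-$n$ quotient $H_1\in\mathfrak B_n\subseteq\mathcal C$, while the converse direction uses only that a counterexample in an \emph{arbitrary} $C\in\mathcal C$ yields a homomorphism $G_1\to C$ not killing $\widehat V$. Your proposal has no analogue of this two-sided device, and it is precisely what makes (a) and (c) follow from the same computation as (b).
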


Finally we observe that torsion groups can be used to study universal theory of groups of completely different nature, namely hyperbolic and lacunary hyperbolic groups. Recall that a finitely generated group $G$ is {\it lacunary hyperbolic} if at least one asymptotic cone of $G$ is a real tree \cite{OOS}. If, in addition, $G$ is finitely presented, then it is hyperbolic. Thus lacunary hyperbolic groups can be thought of as infinitely presented analogues of hyperbolic groups. Alternatively, one may characterize lacunary hyperbolic groups as directed limits of sequences of hyperbolic groups with some additional properties. This characterization is used in \cite{OOS} to show that lacunary hyperbolic groups resemble hyperbolic ones in many respects.

Recall that every two non-abelian free groups have the same universal theory and this theory is decidable. This solution of the long-standing Tarski Problem was independently obtained by Kharlampovich, Myasnikov \cite{KM} and Sela \cite{S06}. The question of whether the elementary theory of every hyperbolic group is decidable is still open.  A partial result in this direction was obtained by Sela in \cite{S}, where he proved that the universal theory of every torsion-free hyperbolic group is decidable. Sela's result was generalized to all hyperbolic groups (possibly with torsion) by Dahmani and Guirardel \cite{DG}. The primary goal of our last theorem is to show that there is no hope to generalize it to lacunary hyperbolic groups.

\begin{thm}\label{hyp}
Let $\mathcal H$ denote the class of all hyperbolic groups.
\begin{enumerate}
\item $\t (\mathcal H)$ is undecidable.

\item There exists a lacunary hyperbolic group $G$ such that
$\t (G)=\t (\mathcal H)$. In particular, $\t (G)$ is undecidable.
\end{enumerate}
\end{thm}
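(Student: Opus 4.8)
The plan is to prove both parts by connecting the universal theory of hyperbolic groups to that of finite (or torsion) groups, where undecidability is already known. For part (1), the natural strategy is to exhibit a relationship of the form $\t(\mathcal H) = \t(\mathcal F)$, or at least a reduction showing that deciding $\t(\mathcal H)$ would decide $\t(\mathcal F)$, which is undecidable by Slobodskoi's theorem (invoked in Theorem~\ref{und}(a)). The key observation I would exploit is that finite groups are, in a suitable sense, hyperbolic-like: every finite group embeds in a finite simple group, and finite groups arise as quotients of hyperbolic groups. More to the point, a universal sentence $\Phi$ holds in all hyperbolic groups if and only if it holds in a class of test groups rich enough to capture the same universal sentences as $\mathcal F$. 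I would try to show $\t(\mathcal H) = \t(\mathcal F)$ directly: the inclusion $\t(\mathcal H) \subseteq \t(\mathcal F)$ would follow if every finite group is a subgroup (or appears as a "universal witness") of some hyperbolic group, since free products of finite groups and, more usefully, suitable hyperbolic embeddings realize every finite group inside a non-elementary hyperbolic group; conversely $\t(\mathcal F) \subseteq \t(\mathcal H)$ requires that any universal sentence failing in some hyperbolic group also fails in some finite group, which one obtains by passing to a finite quotient on which the relevant finite tuple of counterexample elements survives, using residual finiteness of hyperbolic groups.

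For the residual-finiteness direction the difficulty is that hyperbolic groups are not known to be residually finite in general, so I cannot simply quotient an arbitrary hyperbolic counterexample down to a finite group. The cleaner route, and the one I expect to take, is to restrict attention to a subclass of hyperbolic groups that is both residually finite and universally equivalent to $\mathcal F$. Free groups are residually finite and even residually-(finite $p$-group); more usefully, the free product of two finite groups, or a surface group, is a residually finite hyperbolic group. I would argue that the universal theory of the class of virtually free hyperbolic groups (which are all residually finite) already coincides with $\t(\mathcal F)$: a universal sentence $\Phi$ fails in some finite group $F$ precisely when it fails in a suitable hyperbolic group containing $F$, and conversely failing in a residually finite hyperbolic group gives a finite quotient where it fails. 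This sandwiches $\t(\mathcal H)$ between $\t(\mathcal F)$ and $\t$ of a residually finite hyperbolic subclass, forcing equality and hence undecidability.

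For part (2), the plan is to realize $\t(\mathcal H)$ as $\t(G)$ for a single lacunary hyperbolic group $G$, using the characterization of lacunary hyperbolic groups as direct limits of hyperbolic groups with strong injectivity-radius control (from \cite{OOS}). The idea is to enumerate all universal sentences $\Phi_1, \Phi_2, \ldots$ in the group language, and build $G$ as a directed limit $G = \varinjlim G_n$ of hyperbolic groups so that (i) every universal sentence true in all hyperbolic groups remains true in $G$ (so $\t(\mathcal H) \subseteq \t(G)$), and (ii) every universal sentence false in some hyperbolic group is witnessed to be false already inside $G$ (so $\t(G) \subseteq \t(\mathcal H)$). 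Direction (i) is automatic because $G$ is a limit of hyperbolic groups and universal sentences pass to subgroups and to suitable limits. For direction (ii), whenever $\Phi_n \notin \t(\mathcal H)$, there is a hyperbolic group $H_n$ and a tuple violating $\Phi_n$; I would embed the relevant finite configuration into the next stage $G_{n+1}$ of the limit, arranging via the standard small-cancellation/lacunary construction of \cite{OOS} that this finite violating tuple survives in $G$ while the injectivity radii of the defining relations grow fast enough to keep the limit lacunary hyperbolic.

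The main obstacle, and the technical heart of part (2), is carrying out the limit construction so that \emph{both} the hyperbolicity-in-the-limit (lacunarity) and the preservation of each chosen counterexample are maintained simultaneously: embedding a finite violating tuple from $H_n$ into the tower must not collapse it at later stages, yet the relators added at each stage to glue in the next witness must satisfy the graded small-cancellation conditions guaranteeing that an asymptotic cone is a tree. I expect to handle this by a diagonal bookkeeping argument in which, at stage $n$, I add relators of length far exceeding all previously used data, embed the $n$-th witness via a hyperbolically embedded (or malnormal quasiconvex) subgroup so the tuple is not killed by later relators, and appeal to the Olshanskii--Osin--Sapir criterion to certify lacunary hyperbolicity of the resulting limit. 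Controlling that the finitely many words witnessing $\Phi_n$'s failure remain distinct and satisfy the required relations in all later $G_m$ is the step requiring the most care.
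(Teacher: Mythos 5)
Your part (1) has a genuine gap. You correctly obtain the upper bound $\t (\mathcal H)\subseteq \t(\mathcal F)$ (every finite group is hyperbolic), and your observation that the class $\mathcal V$ of virtually free groups satisfies $\t(\mathcal V)=\t(\mathcal F)$ is fine; but from $\mathcal V\subseteq\mathcal H$ you only get $\t(\mathcal H)\subseteq\t(\mathcal V)=\t(\mathcal F)$, i.e.\ a second upper bound on the \emph{same} side. There is no sandwich, and nothing forces $\t(\mathcal F)\subseteq\t(\mathcal H)$: a universal sentence could fail in some (not known to be residually finite) hyperbolic group while holding in every finite group, and a subset of an undecidable set can perfectly well be decidable. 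Indeed the paper itself proves $\t(\mathcal T)\subsetneqq\t(\mathcal F)$, so equalities of this kind cannot be taken for granted. What is missing is a \emph{lower} bound on $\t(\mathcal H)$, and this is where torsion groups enter: by Ivanov--Olshanskii, every hyperbolic group $G$ satisfies $\bigcap_n G^n=\{1\}$, i.e.\ is residually (torsion of bounded exponent); the ultraproduct lemma then gives $\t(\mathcal T)\subseteq\t(G)$, hence $\t(\mathcal T)\subseteq\t(\mathcal H)\subseteq\t(\mathcal F)$. One then needs not merely Slobodskoi's undecidability of $\t(\mathcal F)$ but the stronger recursive-inseparability form of his construction, which shows that \emph{every} theory lying between $\t(\mathcal T)$ and $\t(\mathcal F)$ is undecidable, regardless of where exactly $\t(\mathcal H)$ sits in that (proper) interval. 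Your reduction to $\t(\mathcal F)$ alone cannot be completed without resolving residual finiteness of hyperbolic groups.

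Your part (2) is closer to the mark: the inclusion $\t(\mathcal H)\subseteq\t(G)$ for a direct limit of hyperbolic groups is indeed the Malcev/ultraproduct argument, and the reverse inclusion does require preserving witnesses of failure inside the limit. The paper does this more cleanly than your sentence-by-sentence bookkeeping: it builds a single lacunary hyperbolic group into which \emph{every} hyperbolic group embeds, by iteratively forming $G_n\ast H_{n+1}$, applying the small-cancellation-over-relatively-hyperbolic-groups theorem to pull the generators of $H_{n+1}$ into a suitable copy of $G_n$ while keeping $H_0,\ldots,H_{n+1}$ embedded and keeping the quotient injective on a ball of radius $2^{\delta_n}$, which certifies lacunarity via the injectivity-radius criterion. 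Since universal sentences pass to subgroups, $\t(G)\subseteq\t(\mathcal H)$ follows at once. Your sketch leaves the simultaneous control of witness survival and lacunarity as an acknowledged but unexecuted ``diagonal bookkeeping,'' and in any case the undecidability of $\t(G)$ in part (2) still rests on part (1), so the gap there propagates.
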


{\bf Acknowledgments.} I am grateful to Ben Fine, Daniel Groves, Alexander Olshanskii, and Mark Sapir for useful discussions and remarks.

\section{Torsion groups}

Given a subset $S\subseteq G$ of a group $G$, we denote by $\langle S\rangle ^G$ the smallest normal subgroup of $G$ containing $S$. Recall that a subgroup $H\le G$ is a {\it $Q$-subgroup} of $G$ (or has the {\it congruence extension property} in terminology of \cite{OS}) if for every normal subgroup $N$ of $H$, we have $\langle N\rangle ^G\cap H=N$. Equivalently, for every $N\lhd H$, the natural homomorphism from $H/N$ to $G/\langle N\rangle ^G$ is injective. In what follows we denote by $\mathfrak B _n$ the variety of all groups of exponent $n$, i.e., groups satisfying the identity $X^n=1$. By $B(m,n)$ we denote the free Burnside group of rank $m$, i.e., the $m$-generated free group in $\mathfrak B_n$.

We will use the following theorem of Olshanskii and Sapir proved in \cite[Theorem 1.3]{OS}. The proof was later simplified by Ivanov \cite{Iva}.

\begin{thm}[Olsanskii-Sapir, Ivanov]\label{Iva}
For every integer $m>1$ and every odd sufficiently large $n$ (e.g.,$n>2^{48}$), there exists an embedding of $B(m,n)$ into a finitely presented group $G$ with the following properties.
\begin{enumerate}
\item[a)] $B(m,n)$ is a $Q$-subgroup of $G$.

\item[b)] The restriction of the natural homomorphism $\e\colon
G\to G/G^n$ to $B(m,n)$ is injective and $\e(B(m,n))$ is a
$Q$-subgroup of $G/G^n$.
\end{enumerate}
\end{thm}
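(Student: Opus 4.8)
The plan is to realize $G$ as a finitely presented group built from a rewriting system that simulates the Burnside relators, in the spirit of Higman-type embeddings but with enough geometric control to force the congruence extension property. Concretely, I would encode the recursively enumerable set of relators $\{w^n : w\in F_m\}$ as the language accepted by a symmetric $S$-machine $\mathcal M$ (an invertible multitape rewriting system with finitely many commands). From $\mathcal M$ one forms a group $G$ whose generators are the letters $a_1,\dots,a_m$ of $F_m$ together with the finitely many tape and state letters of $\mathcal M$, and whose relators are the finitely many commands of $\mathcal M$ together with a hub relator that annihilates accepted configurations. Since $\mathcal M$ has finitely many commands, $G$ is automatically finitely presented; the whole point is to choose $\mathcal M$ so that the only relations it induces among $a_1,\dots,a_m$ are precisely the Burnside relations, yielding the embedding $B(m,n)\hookrightarrow G$.

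The central technical task is to verify, through van Kampen diagrams over this presentation, that (i) the subgroup generated by the images of $a_1,\dots,a_m$ is isomorphic to $B(m,n)$, with no collapse and no extra relations, and (ii) this subgroup is a $Q$-subgroup. For (ii) I would fix $N\lhd B(m,n)$ together with a word $u$ over $\{a_1,\dots,a_m\}$ representing an element of $\langle N\rangle^G\cap B(m,n)$. There is then a reduced van Kampen diagram over $G$ whose boundary reads $u$ and whose cells are either defining cells of $G$ or cells labelled by elements of $N$. Exploiting the Olshanskii geometry of bands, annuli, and contiguity subdiagrams---which endows the machine part of $G$ with a hyperbolic-like, graded small cancellation structure---one shows that every machine cell can be removed from such a diagram, leaving a diagram over the single group $B(m,n)$ whose cells carry labels in $N$. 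This forces $u\in N$, which is exactly the identity $\langle N\rangle^G\cap B(m,n)=N$.

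For part b) I would pass to the quotient $G/G^n$ by imposing the law $X^n=1$ on all of $G$. The two assertions to establish are that $B(m,n)\cap G^n=\{1\}$, which is precisely the injectivity of the restriction of $\varepsilon$ to $B(m,n)$, and that the image is again a $Q$-subgroup. Both are obtained by running Olshanskii's graded construction of the Burnside quotient: one kills $n$-th powers of ever longer periodic words stage by stage, each stage being a small cancellation quotient of the previous group, with $n$ odd and larger than $2^{48}$ supplying the numerical room the curvature estimates require. The construction is arranged so that the new relators meet $B(m,n)$ only through its already present exponent-$n$ law; a reduced diagram over $G/G^n$ with boundary in $B(m,n)$ representing an element of $G^n$ then collapses, as in the previous paragraph, to a diagram over $B(m,n)$ alone, forcing its boundary to be trivial. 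The same reduction upgrades the $Q$-subgroup property to the image $\varepsilon(B(m,n))$ inside $G/G^n$.

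The main obstacle throughout is the simultaneous control of finite presentability and the two $Q$-subgroup properties: a finite presentation offers only finitely many relators, yet one must guarantee that their infinitely many consequences among the $a_i$ amount to exactly the Burnside relations and nothing more, and that normal closures computed in $G$ (or $G/G^n$) do not leak into $B(m,n)$ through the auxiliary machine generators. Both requirements rest on delicate estimates for the geometry of graded van Kampen diagrams---in particular bounds on the size of contiguity subdiagrams between machine cells and the boundary---which is where the full strength of the Olshanskii--Sapir apparatus, and its streamlining by Ivanov, is indispensable.
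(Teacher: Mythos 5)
The paper does not prove this statement at all: it is imported verbatim as \cite[Theorem 1.3]{OS}, with Ivanov's simplification \cite{Iva} cited for the proof, and is used as a black box in everything that follows. So there is no internal argument to compare yours against; the only honest benchmark is the Olshanskii--Sapir/Ivanov construction itself. Measured against that, your outline is a faithful description of the architecture of their proof (an $S$-machine recognizing the Burnside relators, a finitely presented group built from its commands plus a hub, and band/contiguity analysis of van Kampen diagrams to control both the induced relations on $a_1,\dots,a_m$ and the congruence extension property), but it is a roadmap rather than a proof: every load-bearing step is explicitly deferred to ``the full strength of the Olshanskii--Sapir apparatus.''

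Two deferrals are worth naming because they are where essentially all of the difficulty lives. First, the claim that the machine relators induce \emph{exactly} the Burnside relations among $a_1,\dots,a_m$ --- no collapse, no extra consequences --- is not a design choice one can simply ``arrange''; it is the main theorem of the construction and requires the full graded analysis of trapezia and computations of the $S$-machine. Second, for part b) it is not enough to ``run Olshanskii's graded construction'' on $G/G^n$: the new relators $g^n$ involve the auxiliary machine letters, and there is no a priori reason their normal closure meets $B(m,n)$ trivially. Establishing $B(m,n)\cap G^n=\{1\}$, and the $Q$-subgroup property of the image, is a separate and delicate induction interleaved with the congruence extension property; the asserted ``collapse to a diagram over $B(m,n)$ alone'' is precisely what has to be proved, not a routine reduction. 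Within the present paper the correct move is simply the citation; as a standalone proof, your text identifies the right strategy but supplies none of the estimates that make it work.
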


Theorem \ref{tor} is a particular case of the following more general result.

\begin{thm}
Let $\mathcal C$ be a class of groups such that $\mathfrak B_n\subseteq \mathcal C$ for some odd $n>2^{48}$.
Then $\t (\mathcal C) \subsetneqq \t (\mathcal C\cap \mathcal F)$.
\end{thm}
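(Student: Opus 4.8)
The inclusion $\t(\mathcal C)\subseteq\t(\mathcal C\cap\mathcal F)$ is immediate from monotonicity, since $\mathcal C\cap\mathcal F\subseteq\mathcal C$. So the entire task is to exhibit a single universal sentence that holds in every finite group of $\mathcal C$ yet fails in some member of $\mathcal C$. The plan is to exploit the gap between the free Burnside group and its finite quotients, transported into a finitely presented group by Theorem \ref{Iva}. Fix $m=2$ and the given odd $n>2^{48}$, and write $B=B(m,n)$.

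The engine of the argument is the choice of a distinguished witness element. By Novikov--Adian the group $B$ is infinite, while by Zelmanov's solution of the restricted Burnside problem its maximal residually finite quotient is finite. Hence the finite residual $R$ of $B$ (the intersection of all finite-index subgroups) is nontrivial, and I fix some $w\in R$ with $w\neq 1$. The key property of $w$ is that it dies under every homomorphism from $B$ into a finite group, because any such kernel has finite index and therefore contains $R$, yet $w$ is itself nontrivial in $B$.

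Now I invoke Theorem \ref{Iva}: $B$ embeds into a finitely presented group $G=\langle x_1,\dots,x_s\mid r_1,\dots,r_t\rangle$, with the generators of $B$ represented by words $\beta_1(\bar x),\beta_2(\bar x)$ in the $x_j$. Let $W(\bar x)=w(\beta_1(\bar x),\beta_2(\bar x))$, which is nontrivial in $G$ because $B\hookrightarrow G$, and consider the universal sentence
\[
\sigma \;=\; \forall \bar x\,\Big(\ \bigwedge_{j} r_j(\bar x)=1 \ \Rightarrow\ W(\bar x)=1\ \Big).
\]
I then verify two claims. First, $\sigma$ holds in every finite group $F$: a tuple satisfying all $r_j$ defines a homomorphism $\psi\colon G\to F$, whose restriction to $B$ lands in a finite group and hence kills $R\ni w$, so $W(\bar x)=\psi(w)=1$; since this uses only finiteness, $\sigma\in\t(\mathcal F)\subseteq\t(\mathcal C\cap\mathcal F)$. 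Second, $\sigma$ fails in $G/G^n$: by part (b) of Theorem \ref{Iva} the map $\e\colon G\to G/G^n$ is injective on $B$, so $\e(w)\neq 1$; thus the images of the $x_j$ satisfy every $r_j$ (as $G/G^n$ is a quotient of $G$) while forcing $W\neq 1$, and $G/G^n$ has exponent $n$, whence $G/G^n\in\mathfrak B_n\subseteq\mathcal C$. Combining the two claims yields $\sigma\in\t(\mathcal C\cap\mathcal F)\setminus\t(\mathcal C)$, which is exactly the desired strict inclusion.

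The main obstacle is conceptual rather than computational: one must locate the right invisible witness $w$. Everything hinges on the coexistence of the infinitude of $B(m,n)$ with the finiteness of its finite residual, that is, on combining Novikov--Adian with Zelmanov so that a nontrivial element killed by all finite groups exists at all. The role of Theorem \ref{Iva} is then purely to carry this phenomenon into a finitely presented group and, crucially through part (b), into a genuine member $G/G^n$ of $\mathcal C$. The only routine points to keep honest are that $W$ really represents the fixed element $w\in B$ and that passage modulo $n$-th powers indeed lands in $\mathfrak B_n$.
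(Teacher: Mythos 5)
Your proof is correct and follows essentially the same route as the paper: the same witness $w$ in the finite residual of $B(2,n)$ (nontrivial by Novikov--Adian plus Zelmanov), the same universal sentence built from the finite presentation of the Olshanskii--Sapir--Ivanov group $G$, and the same use of part (b) of Theorem \ref{Iva} to falsify it in $G/G^n\in\mathfrak B_n\subseteq\mathcal C$. No gaps.
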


\begin{proof}
The inclusion $\t (\mathcal C) \subseteq \t (\mathcal C\cap \mathcal F)$ is obvious, so we only need to show that $\t (\mathcal C) \ne \t (\mathcal C\cap \mathcal F)$. Let
\begin{equation}\label{G}
G=\langle x_1, \ldots , x_s \mid R_1, \ldots , R_t\rangle
\end{equation}
be the finitely presented group from Theorem \ref{Iva} that contains $B(2,n)$. Let $w$ be a nontrivial element of $B(2,n)$ whose image in every finite quotient of $B(2,n)$ is trivial. The existence of such an element follows immediately from the solution of the Bounded Burnside Problem by Novikov, Adian (for odd exponents) and Ivanov (in the general case) \cite{Ad,Iva}, and the solution of the Restricted Burnside Problem by Zelmanov \cite{Z1,Z2}. We think of $w$ as an element of $G$ and choose a word $W$ in the alphabet $\{ x_1^{\pm 1}, \ldots , x_s^{\pm 1}\}$ that represents $w$.

Clearly $W$ represents $1$ in every finite quotient of $G$. Thus the universal sentence
$$
L = \forall x_1\ldots \forall x_s\, (R_1=1 \,\&\, \ldots\, \&\, R_t=1 \, \Rightarrow W=1)
$$
is true in every finite group. Indeed if elements $x_1, \ldots, x_s$ of a finite group $K$ satisfy the relations $R_1=1, \ldots , R_t=1$, then the subgroup of $K$ generated by $x_1, \ldots, x_s$ is a finite quotient of $G$.

Thus $L\in \t (\mathcal F)\subseteq \t (\mathcal C \cap \mathcal F)$. On the other hand $L$ is not true in $G/G^n$. Indeed the images of $x_1, \ldots, x_s$ under the natural homomorphism $\e\colon G\to G/G^n$ still satisfy the relations $R_1=1, \ldots , R_t=1$. However $W$ represents the element $\e (w)$ in $G/G^n$, which is nontrivial since the restriction of $\e $ to $B(2,n)$ is injective by Theorem \ref{Iva}. Thus $L\notin \t (\mathfrak B_n)$. As $\mathfrak B_n\subseteq \mathcal C$ we have $\t (\mathcal C)\subseteq \t (\mathfrak B_n)$, and hence $L\notin \t(\mathcal C)$.
\end{proof}

To prove our next theorem we need a particular case of a result of Kharlampovich \cite{Kh}.

\begin{thm}[Kharlampovich]\label{Kh}
Suppose that $n=pq$, where $p\ge 3$ is a prime and $q$ has an odd divisor $\ge 665$. Then for some $m\ge 2$ there exists a normal subgroup $N\le B(m,n)$ such that $N$ is the normal closure of finitely many elements in $B(m,n)$ and the word problem in $B(m,n)/N$ is undecidable.
\end{thm}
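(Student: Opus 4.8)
The plan is to realize an undecidable problem inside a finitely-normally-presented quotient of $B(m,n)$ by importing the classical Novikov--Boone--Higman strategy into the Burnside variety $\mathfrak B_n$. First I would fix a finitely generated, recursively presented group $H$ whose word problem is undecidable --- for instance a group directly simulating a universal Minsky (or Turing) machine --- and then seek to embed a suitable $\mathfrak B_n$-version of $H$ into a group of the form $B(m,n)/N$, where $N$ is the normal closure of finitely many words. Since an embedding preserves the word problem, undecidability in the recursively presented subgroup forces undecidability in the finitely-normally-presented overgroup, which is exactly the conclusion. The crux is therefore a \emph{Higman embedding theorem relative to the variety $\mathfrak B_n$}: every recursively presented group in $\mathfrak B_n$ should embed into a group that is finitely presented in $\mathfrak B_n$, i.e. into a quotient of some $B(m,n)$ by a finitely generated normal subgroup. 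It would be natural to arrange this embedding through a $Q$-subgroup (as in Theorem \ref{Iva}), since the congruence extension property is precisely what prevents the ambient Burnside relations from distorting the embedded copy.

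Second, to build such an embedding I would use Olshanskii's geometric method of graded presentations and small cancellation theory over free Burnside groups, together with the refinements of Ivanov and Kharlampovich. One constructs the target group as a direct limit of groups defined by graded sets of relators: the machine-encoding relators are introduced alongside the implicit Burnside relators $A^n$, and at each rank one verifies a small-cancellation condition guaranteeing that van Kampen diagrams over the presentation are controlled. The numeric hypotheses enter precisely here. The requirement that $q$ have an odd divisor $\ge 665$ is what makes the Novikov--Adian machinery applicable, supplying the hyperbolic-like geometry on which the diagram estimates rest, while the prime factor $p\ge 3$ is what I would exploit to realize the auxiliary torsion elements of the simulation --- the "switches" and markers that drive the encoded computation --- without the simulation collapsing. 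Writing $n=pq$ thus separates a geometric role (the odd factor of $q$) from a combinatorial-encoding role (the factor $p$).

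The main obstacle, and the heart of the argument, is controlling the interaction between the encoding relators and the Burnside identity. In the free-group setting of Boone and Higman the computation is encoded cleanly, but in $B(m,n)/N$ every element satisfies $A^n=1$, and passing to normal closures can create unintended collapses that either trivialize the encoded submachine or introduce spurious relations making the word problem decidable after all. Overcoming this requires proving that the combined system of machine relators and Burnside relators forms a graded presentation satisfying the small-cancellation condition at every rank, so that the embedded copy of $H$ remains undistorted and its word problem is faithfully reflected in $B(m,n)/N$. Carrying out this rank-by-rank verification --- simultaneously preserving the undecidable encoding and maintaining the diagram estimates --- is the technically demanding core, and it is exactly where the delicate choice $n=pq$ and the bound $665$ become indispensable.
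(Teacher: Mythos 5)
First, a point of context: the paper does not prove this statement at all --- it is imported verbatim as a special case of Kharlampovich's theorem in \cite{Kh}, so there is no in-paper argument to compare yours against. Judged on its own merits, your text is a research programme rather than a proof: every technically substantive step is named and then deferred. The two load-bearing claims --- (i) a Higman embedding theorem relative to the variety $\mathfrak B_n$, and (ii) the rank-by-rank small-cancellation verification that the machine relators coexist with the Burnside identity --- are exactly the content of the theorem, and you supply no argument for either. In particular, claim (i) is not a known tool you can invoke: a variety-relative Higman embedding for Burnside varieties (every recursively presented group of $\mathfrak B_n$ embeds into one finitely presented inside $\mathfrak B_n$) is a substantially stronger statement than the theorem itself and is not established in the literature; building your proof on it inverts the logical order.

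The actual argument in \cite{Kh} does not route through such an embedding. Kharlampovich directly writes down a finite presentation in the variety $\mathfrak B_n$ that interprets a Minsky (two-register) machine with undecidable halting problem, and the decomposition $n=pq$ is used structurally rather than in the way you describe: the prime $p\ge 3$ furnishes an elementary abelian normal section of exponent $p$ which carries the registers of the machine (a module over a group of exponent dividing $q$), while the odd divisor $\ge 665$ of $q$ is what makes the Novikov--Adian theory \cite{Ad} applicable to the exponent-$q$ part, guaranteeing the infiniteness and the combinatorial control needed for the simulation not to collapse. Your sketch gestures at the right ingredients (machine simulation, Novikov--Adian geometry, the role of the two factors of $n$), but assigning $p$ the role of ``switches and markers'' misses that the essential point is the abelian-of-exponent-$p$ normal subgroup acting as the machine's tape, and nothing in your outline confronts the central difficulty of showing that the imposed relations do not trivialize that section. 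As written, the proposal cannot be accepted as a proof.
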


Theorem \ref{und} is an immediate corollary of the result below.

\begin{thm}\label{und1}
Suppose that $\mathcal C$ is a class of groups such that $\mathfrak B_n\subseteq \mathcal C$ for some odd non-prime $n>2^{48}$. Then $\t (\mathcal C)$ is undecidable.
\end{thm}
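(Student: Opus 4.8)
The plan is to prove undecidability by reducing to $\t(\mathcal C)$ the word problem of the group $B(m,n)/N$ furnished by Theorem \ref{Kh}, which is undecidable; the reduction will be a computable map sending a word to a universal sentence. First I would verify that Theorem \ref{Kh} applies to our $n$. Since $n$ is odd, composite, and $>2^{48}$, its smallest prime factor $p$ is odd and satisfies $p\le\sqrt n$; hence $q=n/p\ge\sqrt n>2^{24}>665$, and being odd, $q$ is itself an odd divisor of $q$ that is $\ge 665$. Thus $n=pq$ meets the hypotheses of Theorem \ref{Kh}, which yields some $m\ge 2$, finitely many elements $r_1,\dots,r_k\in B(m,n)$, and $N=\langle r_1,\dots,r_k\rangle^{B(m,n)}$ such that the word problem in $B(m,n)/N$ is undecidable.

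Next, fix the finitely presented overgroup $G=\langle x_1,\dots,x_s\mid R_1,\dots,R_t\rangle$ of Theorem \ref{Iva} containing $B(m,n)$. Each normal generator $r_j$ is a specific element of $B(m,n)\le G$, so I can choose a word $\rho_j$ in $x_1^{\pm1},\dots,x_s^{\pm1}$ representing it; likewise any word $v$ in the generators of $B(m,n)$ is represented by a word $V$ in the $x_i$. To such a $v$ I associate the universal sentence
$$
\Phi_v=\forall x_1\cdots\forall x_s\,\Big(R_1=1\,\&\,\cdots\,\&\,R_t=1\,\&\,\rho_1=1\,\&\,\cdots\,\&\,\rho_k=1\ \Rightarrow\ V=1\Big).
$$
The assignment $v\mapsto\Phi_v$ is plainly computable, so it remains to prove the equivalence $\Phi_v\in\t(\mathcal C)\iff v=1$ in $B(m,n)/N$.

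The two directions are driven by opposite phenomena. If $v=1$ in $B(m,n)/N$, then $v\in N\le\langle r_1,\dots,r_k\rangle^G$, so $V$ lies in the normal closure of $\{R_i,\rho_j\}$ in the free group on $x_1,\dots,x_s$; consequently $\Phi_v$ holds in \emph{every} group, and a fortiori in all of $\mathcal C$. For the converse I produce a single refuting group lying inside $\mathcal C$. Let $\e\colon G\to G/G^n$ be as in Theorem \ref{Iva}, and set $\bar G_N=(G/G^n)/\langle \e(r_1),\dots,\e(r_k)\rangle^{G/G^n}$, which has exponent $n$ and hence lies in $\mathfrak B_n\subseteq\mathcal C$. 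Because $\e$ is injective on $B(m,n)$ and $\e(B(m,n))$ is a $Q$-subgroup of $G/G^n$ by Theorem \ref{Iva}(b), the quotient $B(m,n)/N$ embeds into $\bar G_N$, with $v$ mapping to the value $V(\bar x_1,\dots,\bar x_s)$ of the images $\bar x_i$ of the generators. These $\bar x_i$ satisfy all the premises $R_i=1,\rho_j=1$ of $\Phi_v$, so if $v\ne 1$ in $B(m,n)/N$, injectivity of the embedding forces $V(\bar x)\ne 1$, and $\Phi_v$ fails in $\bar G_N\in\mathcal C$. This yields the equivalence and hence the reduction.

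The step I expect to be most delicate is exactly this converse. The class $\mathcal C$ may be vastly larger than $\mathfrak B_n$, so requiring $\Phi_v\in\t(\mathcal C)$ is an a priori stronger demand than $\Phi_v\in\t(\mathfrak B_n)$, and one must keep the reduction faithful for an \emph{arbitrary} such $\mathcal C$. The mechanism that makes this go through is precisely the congruence extension property of $\e(B(m,n))$ in $G/G^n$: it manufactures, for each nontrivial element of $B(m,n)/N$, one exponent-$n$ group inside $\mathcal C$ in which that element survives while the finitely many defining relations still hold. Granting this, undecidability of $\t(\mathcal C)$ follows at once from undecidability of the word problem in $B(m,n)/N$.
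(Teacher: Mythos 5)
Your proof is correct and follows essentially the same route as the paper: the same reduction of the word problem in $B(m,n)/N$ to membership in $\t(\mathcal C)$, with your sentence $\Phi_v$ being the paper's $L(V)$ for the explicit presentation of $G/\langle N\rangle^G$ by the relators $R_i$ and $\rho_j$, and your witness group $\bar G_N$ being the paper's $H_1$, refuting $\Phi_v$ via the $Q$-subgroup property of $\e(B(m,n))$ in $G/G^n$. The only (welcome) additions are your explicit verification that every odd composite $n>2^{48}$ satisfies the hypotheses of Kharlampovich's theorem, which the paper asserts without proof, and a slightly more economical forward direction via von Dyck that avoids invoking part (a) of Theorem \ref{Iva}.
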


\begin{proof} Clearly every non-prime odd $n>2^{48}$ satisfies the hypothesis of Theorem \ref{Kh}. Let $N$ be a normal subgroup of $B(m,n)$, $m\ge 2$, such that $N$ is the normal closure of finitely many elements in $B(m,n)$ and the word problem in $B(m,n)/N$ is undecidable. Further let $G$ be the finitely presented group from Theorem \ref{Iva} that contains $B(m,n)$.

Let $A$ be the natural image of $B(m,n)$ in  $G_1=G/\langle N\rangle ^G$. Since $B(m,n)$ is a $Q$-subgroup of $G$, we have $A\cong B(m,n)/N$. Note that $G_1$ is finitely presented as $N$ is the normal closure of finitely many elements in $B(m,n)$. Let us fix some finite presentation $$G_1=\langle x_1, \ldots , x_s \mid U_1, \ldots , U_p\rangle .$$

Similarly let $H=G/G^n$ and $H_1=H/\langle \e (N)\rangle ^{H}$, where $\e\colon G\to H$ is the natural homomorphism. Let also $B$ denote the image of $\e(B(m,n))$ in $H_1$. Since the restriction of $\e$ to $B(m,n)$ is injective and $\e (B(m,n))$ is a $Q$-subgroup of $H$ we have $B\cong \e(B(m,n))/\e(N)\cong B(m,n)/N$. Moreover, it is easy to see that there exists a homomorphism $\gamma \colon G_1\to H_1$ which induces an isomorphism $A\to B$.

Let $y_1, \ldots, y_m$ be generators of $A$, $Y_1, \ldots , Y_m$ words in the alphabet $\mathcal X =\{ x_1^{\pm 1}, \ldots , x_s^{\pm 1}\}$ that represent these elements in $G_1$. Given a word $V=V(y_1, \ldots , y_m)$ in the alphabet $\mathcal Y=\{y_1^{\pm 1}, \ldots , y_m^{\pm 1}\}$, let $\widehat V(x_1, \ldots , x_s)$ denote the word obtained from $V$ by replacing $y_i$ with $Y_i$ for every $i=1, \ldots , m$. For every word $V$ in $\mathcal Y$, let
$$
L (V) = \forall x_1\ldots \forall x_s\, (U_1=1\, \&\, \ldots\, \& \, U_p=1 \, \Rightarrow \widehat V(x_1, \ldots , x_s)=1).
$$

Note that if $L(V)\in \t (\mathcal C)$, then $L$ is true in $H_1$ as $H_1\in \mathfrak B_n\subseteq \mathcal C$. Therefore $\widehat V(\gamma (x_1), \ldots , \gamma (x_s))=1$ in $B$. Since $\gamma $ is injective on $A$, $\widehat V(x_1, \ldots , x_s)=1$ in $A$ and hence $V=1$ in $A$. On the other hand, if $L(V)\notin \t (\mathcal C)$, then  there exists a group $C\in \mathcal C$ such that $L(V)$ is false in $C$. This means that there exists a homomorphism $\alpha \colon G_1\to C$ such that $\widehat V(\alpha (x_1), \ldots , \alpha (x_s))\ne 1 $ in $C$. Clearly this implies $V\ne 1$ in $A$.

Thus if we were able to decide if $L(V)\in \t (\mathcal C)$ for every word $V$ in the alphabet $\mathcal Y$, we would be able to solve the word problem in $A\cong B(m,n)/N$. Since the latter problem is undecidable, $\t (\mathcal C)$ is undecidable as well.
\end{proof}

\section{Hyperbolic groups}

The following result is an immediate consequence of methods of \cite{Slo}, although it is not stated explicitly there.

\begin{lem}[Slobodskoi]\label{slo}
Let $T$ be any theory such that
\begin{equation}\label{ttf}
\t (\mathcal T)\subseteq T\subseteq \t (\mathcal F).
\end{equation}
Then $T$ is undecidable.
\end{lem}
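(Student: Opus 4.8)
The plan is to produce a single recursive family of universal sentences that separates $\mathcal T$ from $\mathcal F$ uniformly, and then let the inclusions in (\ref{ttf}) do all the work. Concretely, I would extract from Slobodskoi's proof in \cite{Slo} a recursive sequence of universal sentences $\Phi_0,\Phi_1,\ldots$, each a quasi-identity of the form ``finitely many relators imply that a fixed word equals $1$'', for which the set $S=\{\, i : \Phi_i\in \t(\mathcal F)\,\}$ is undecidable; this is precisely the content of the reduction showing that $\t(\mathcal F)$ is undecidable.

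The key point I would then verify is the implication
$$
\Phi_i\in \t(\mathcal F)\ \Longrightarrow\ \Phi_i\in \t(\mathcal T)
$$
for every $i$. Write $\Phi_i=\forall x_1\ldots\forall x_k\,(R_1=1\,\&\,\ldots\,\&\,R_r=1\Rightarrow W_i=1)$ and put $G_i=\langle x_1,\ldots,x_k\mid R_1,\ldots,R_r\rangle$. A group refutes $\Phi_i$ exactly when it receives a homomorphism from $G_i$ that does not kill $W_i$, so the displayed implication asserts that whenever $W_i$ survives in some torsion quotient of $G_i$ it already survives in some finite quotient of $G_i$. Since the reverse inclusion $\t(\mathcal T)\subseteq \t(\mathcal F)$ is automatic from $\mathcal F\subseteq\mathcal T$, this would give $\Phi_i\in \t(\mathcal T)\Leftrightarrow\Phi_i\in \t(\mathcal F)$, i.e. $\{\, i:\Phi_i\in \t(\mathcal T)\,\}=\{\, i:\Phi_i\in \t(\mathcal F)\,\}=S$.

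Granting this, the lemma is immediate. If $\Phi_i\in \t(\mathcal F)$ then $\Phi_i\in \t(\mathcal T)\subseteq T$; and if $\Phi_i\notin \t(\mathcal F)$ then $\Phi_i\notin T$ because $T\subseteq \t(\mathcal F)$. Hence $\Phi_i\in T$ if and only if $i\in S$, so $i\mapsto\Phi_i$ is a recursive many-one reduction of the undecidable set $S$ to membership in $T$, and $T$ is undecidable.

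The main obstacle is the displayed implication: replacing a torsion counterexample by a finite one for the specific sentences $\Phi_i$. Here a recursion-theoretic warning is in order. One must \emph{not} hope to strengthen it to $\Phi_i\in \t(\mathcal F)\Leftrightarrow W_i=1$ in $G_i$ (truth in all groups), because the latter condition is recursively enumerable while $S$ is co-r.e.\ and non-recursive, so the two cannot coincide. The construction must therefore be threaded carefully: finite quotients of $G_i$ have to detect exactly the torsion-quotient behaviour of the marker word $W_i$, while the common membership problem stays undecidable. This is precisely the delicate feature of Slobodskoi's encoding of computations, and it accounts for the otherwise surprising appearance of torsion groups in a statement about finite groups. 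If his published sentences do not literally possess the property, I would re-run his reduction while keeping explicit track of the witnessing groups, arranging that every torsion witness for $W_i\ne 1$ is accompanied by a finite one.
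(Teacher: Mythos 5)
Your overall strategy (sandwich $T$ between $\t (\mathcal T)$ and $\t (\mathcal F)$ and let a recursive family of test sentences carry an undecidable problem into $T$) is the right one, but the specific reduction you set up demands strictly more than Slobodskoi's theorem provides, and the extra demand is exactly the unproved step you yourself flag as ``the main obstacle.'' You require a recursive family $\Phi_i$ on which $\t (\mathcal T)$ and $\t (\mathcal F)$ \emph{agree}, so that membership in any intermediate $T$ coincides with membership in a single undecidable set $S$. Slobodskoi's construction does not give this: it gives an effective family $\Psi(k)$ together with two disjoint \emph{recursively inseparable} sets $X,Y\subseteq\mathbb N$ such that $\Psi(k)\in\t (\mathcal T)$ for $k\in X$ and $\Psi(k)\notin\t (\mathcal F)$ for $k\in Y$; for $k\notin X\cup Y$ nothing is asserted, and in particular the two theories are not claimed to agree there. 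The paper's proof needs only this weaker data: if $T$ were decidable, then $I=\{k:\Psi(k)\in T\}$ would be a recursive set separating $X$ from $Y$ (the inclusion $\t (\mathcal T)\subseteq T$ gives $X\subseteq I$, and $T\subseteq\t (\mathcal F)$ gives $I\cap Y=\emptyset$), contradicting inseparability. No many-one reduction from a single set, and no agreement of the two theories on the test sentences, is ever needed.

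The gap in your version is therefore the implication $\Phi_i\in\t (\mathcal F)\Rightarrow\Phi_i\in\t (\mathcal T)$. Note that this implication is false for quasi-identities in general: the sentence $L$ built in the proof of Theorem \ref{tor} in this very paper is a quasi-identity lying in $\t (\mathcal F)\setminus\t (\mathcal T)$ (it holds in every finite group but fails in the torsion group $G/G^n$). So there is no soft reason for Slobodskoi's sentences to have the property, and your fallback (``re-run his reduction \dots\ arranging that every torsion witness for $W_i\ne 1$ is accompanied by a finite one'') is precisely the hard content you would have to supply; controlling the sentences on the inputs where the encoded computation does not halt is exactly what the recursive-inseparability formulation is designed to avoid. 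Replace the many-one reduction by the separating-set argument and your proof goes through using only the data Slobodskoi actually establishes.
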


\begin{proof}
In \cite{S}, Slobodskoi constructed an effective sequence of formulas $\Psi (k)$, $k\in \mathbb N$, and two (disjoint) recursively inseparable subsets $X,Y\in \mathbb N$ such that $\Psi (k)$ holds for every torsion group whenever $k\in X$ and $\Psi (k)$ does not hold in some finite group whenever $x\in Y$. Recall that two disjoint subsets $X,Y\subseteq \mathbb N$ are said to be {\it recursively inseparable} if there is no recursive subset $I\subseteq \mathbb N$ such that $X\subseteq I$ and $I\cap Y=\emptyset $.

Let now $T$ be a decidable theory that satisfies (\ref{ttf}). Set $I=\{ k\in \mathbb N\mid \Psi (k)\in T\} $. Since $T$ is decidable, $I$ is recursive. Clearly (\ref{ttf}) implies $X\subseteq I$ and $I\cap Y=\emptyset $, which contradicts recursive inseparability of $X$ and $Y$.
\end{proof}

The next lemma is quite elementary. In fact, it follows from Malstsev's Local Theorem (see, e.g., \cite[Theorem 27.3.3]{KM}) applied to groups considered as algebraic systems with one ternary predicate instead of multiplication. We provide a proof for convenience of the reader.
\begin{lem}\label{mal}
Let $G$ be a group and $\mathcal C$ is a class of groups. Suppose that at least one of the following conditions holds.
\begin{enumerate}
\item[(a)] There exists a sequence of normal subgroups $N_1\supseteq N_2\supseteq \ldots $ of $G$ such that $\bigcap\limits_{i=1}^\infty N_i =\{ 1\} $ and $G/N_i\in \mathcal C$ for all $i\in \mathbb N$.
\item[(b)] There is a group $G_0$ and a sequence of normal subgroups $N_1\subseteq N_2\subseteq \ldots $ of $G_0$ such that $G=G_0/\bigcup\limits_{i=1}^\infty N_i$ and $G_0/N_i\in \mathcal C$ for every $i\in \mathbb N$.
    \end{enumerate}
Then $\t (\mathcal C)\subseteq\t (G)$.
\end{lem}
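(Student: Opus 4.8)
The plan is to prove the contrapositive of failure: if a universal sentence $\Phi$ fails in some group of $\mathcal C$, then it fails in $G$. Equivalently, I want to show that every universal sentence true in $G$ is true throughout $\mathcal C$, i.e.\ $\t(\mathcal C)\subseteq\t(G)$. The key observation is that a universal sentence $\forall x_1\ldots\forall x_k\,\Phi$ (with $\Phi$ quantifier-free) fails in $G$ precisely when there is a finite tuple $g_1,\ldots,g_k$ of elements of $G$ witnessing $\neg\Phi$. Since $\Phi$ is quantifier-free, its truth on a given tuple depends only on which of the finitely many group words appearing in $\Phi$ evaluate to $1$ on that tuple. The heart of the matter is therefore to transport such a finite witnessing configuration from $G$ into one of the groups $G/N_i$ (in case (a)) or to pull it down from one of the $G_0/N_i$ to $G=G_0/\bigcup N_i$ (in case (b)).

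\textbf{Case (a).} Suppose $\forall\bar x\,\Phi\notin\t(G)$, so there is a tuple $\bar g=(g_1,\ldots,g_k)$ in $G$ with $\neg\Phi(\bar g)$. First I would fix finite lists of the group words $w_1,\ldots,w_r$ (the equalities and inequalities) that occur in $\Phi$. Among these, let $w_{j_1},\ldots,w_{j_\ell}$ be exactly those that evaluate to a nontrivial element on $\bar g$; set $h_\alpha=w_{j_\alpha}(\bar g)\neq 1$. Because $\bigcap_i N_i=\{1\}$ and the family is nested decreasingly, for each $\alpha$ there is an index $i_\alpha$ with $h_\alpha\notin N_{i_\alpha}$; taking $i=\max_\alpha i_\alpha$, all the $h_\alpha$ survive in the quotient map $\pi_i\colon G\to G/N_i$. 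Now I claim $\neg\Phi$ holds on the image tuple $\pi_i(\bar g)$ in $G/N_i$: a word that was already trivial on $\bar g$ stays trivial (homomorphisms preserve equalities), and each nontrivial $w_{j_\alpha}$ maps to $\pi_i(h_\alpha)\neq 1$ by our choice of $i$. Hence the truth-pattern of all words in $\Phi$ is preserved, so $\neg\Phi$ holds in $G/N_i\in\mathcal C$, giving $\forall\bar x\,\Phi\notin\t(\mathcal C)$.

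\textbf{Case (b).} Here I would run the analogous argument in the opposite direction. Given $\forall\bar x\,\Phi\notin\t(G)$, lift a witnessing tuple to $G_0$ by choosing preimages $\bar g_0$ under the quotient map $q\colon G_0\to G_0/\bigcup_i N_i=G$. Again fix the finitely many words $w_1,\ldots,w_r$ of $\Phi$; those trivial on $q(\bar g_0)$ evaluate, on $\bar g_0$ in $G_0$, to elements of $\bigcup_i N_i$, while the finitely many nontrivial ones evaluate to elements outside $\bigcup_i N_i$. Since the $N_i$ are increasingly nested and there are only finitely many words to control, a single index $i$ can be chosen so large that every ``trivial'' word already lies in $N_i$ while none of the ``nontrivial'' words does; here the finiteness of $r$ is exactly what lets one increasing index handle all words simultaneously. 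Passing to $G_0/N_i\in\mathcal C$ then reproduces the full truth-pattern of $\Phi$ on the image of $\bar g_0$, so $\neg\Phi$ holds there and again $\forall\bar x\,\Phi\notin\t(\mathcal C)$.

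\textbf{The main obstacle} is purely bookkeeping rather than conceptual: one must be careful that the witness for $\neg\Phi$ is captured by a \emph{finite} amount of data, namely the finitely many subwords of the quantifier-free matrix $\Phi$ together with their trivial/nontrivial status on the witnessing tuple. Once that finiteness is isolated, the monotone (nested) structure of the $N_i$ together with $\bigcap N_i=\{1\}$ in (a) or $\bigcup N_i$ being the kernel in (b) lets a single index simultaneously preserve all the relevant nontriviality relations. Since $\neg\Phi$ is preserved, by contraposition every sentence of $\t(\mathcal C)$ lies in $\t(G)$, which is the desired conclusion.
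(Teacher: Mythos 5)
Your proof is correct, but it takes a genuinely different route from the paper's. The paper embeds $G$ (or, in case (b), realizes $G$ as the image of $G_0$) into the ultraproduct $\prod_{i}(G/N_i)/\mathcal U$ over a non-principal ultrafilter, invokes the \L o\'s Theorem to push a sentence true in all $G/N_i$ up to the ultraproduct, and then uses the fact that universal sentences pass to subgroups. You instead argue the contrapositive directly and finitarily: a failure of a universal sentence in $G$ is witnessed by a tuple together with the trivial/nontrivial pattern of the finitely many words occurring in the quantifier-free matrix, and the nestedness of the $N_i$ (together with $\bigcap N_i=\{1\}$ in case (a), respectively $\bigcup N_i=\Ker$ in case (b)) lets a single index $i$ preserve that entire pattern in $G/N_i$ or $G_0/N_i$. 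Your choice of $i=\max_\alpha i_\alpha$ in case (a) works precisely because the $N_i$ decrease, and in case (b) the nontrivial words automatically survive in every $G_0/N_i$ since they avoid the whole union; both steps are sound. What your approach buys is the elimination of ultrafilters and the compactness-type machinery entirely, giving an explicit quotient in which the sentence fails; what the paper's approach buys is brevity and the fact that it is the standard specialization of Maltsev's Local Theorem, which the paper itself cites as the conceptual source of the lemma.
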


\begin{proof}
 First assume that (a) holds. Let $\mathcal U$ be a non-principal ultrafilter, $P=\prod\limits_{i=1}^\infty (G/N_i)/\mathcal U$ the corresponding ultraproduct of $(G/N_i)$'s. It is straightforward to verify that the map $\e\colon G\to P$ defined by $\e (g)=(gN_1, gN_2, \ldots )$ is a homomorphism and ${\rm Ker }(\e )=\bigcap\limits_{i=1}^\infty N_i =\{ 1\}$. Suppose that a first order formula $\Phi$ is true in $G/N_i$ for all $i\in \mathbb N$. By the \L o\'s Theorem $\Phi $ is true in $P$. If, in addition, $\Phi $ is universal, then it is obviously true in every subgroup of $P$. In particular, $\Phi $ is true in $\e(G)\cong G$.

Suppose now that (b) holds. Let $\mathcal U$ be a non-principal ultrafilter, $\e \colon G_0\to \prod\limits_{i=1}^\infty (G_0/N_i)/\mathcal U$ the map defined by $\e(g)=(gN_1, gN_2, \ldots )$. It is easy to verify that $\e $ is a homomorphism and ${\rm Ker} (\e )= \bigcup\limits_{i=1}^\infty N_i$. Hence $\e (G_0)\cong G$. The rest of the proof is the same as in the first case.
\end{proof}

Recall that a geodesic metric space $M$ is $\delta $-hyperbolic if for every geodesic triangle $\Delta $ in $M$, every side of $\Delta $ belongs to the union of the closed $\delta $-neighborhoods of the other two sides. A group $G$ generated by a set $X$ is $\delta $-hyperbolic relative to $X$ if its Caley graph with respect to $X$ is a $\delta $-hyperbolic metric space. Further let $\e$ be a homomorphism from the group $G$ to another group. The {\it injectivity radius} of $\e$ with respect to $X$, denoted $IR_X(\alpha )$, is defined to be the supremum of all $r>0$ such that $\e$ is injective on a ball of radius $r$ in $G$ (with respect to the word metric corresponding to $X$). In particular, $IR_X(\alpha )=\infty $ if $\e $ is injective. We will make use of the following characterization of lacunary hyperbolic groups proved in \cite[Theorem 1.1]{OOS}.

\begin{thm}[Olshanskii, Osin, Sapir]\label{oos}
A finitely generated group $G$ is lacunary hyperbolic if and only if $G$ is the direct limit of a sequence of finitely generated
groups and epimorphisms
$
G_0\stackrel{\e _1}\longrightarrow
G_1\stackrel{\e _2}\longrightarrow \ldots $ such that $G_i$ is
generated by a finite set $S_i$, $\e _i(S_i)=S_{i+1}$, each
$G_i$ is $\delta _i$--hyperbolic with respect to $S_i$, and
$\delta _i=o(IR_{S_i}(\e _i))$ as $i\to \infty$.
\end{thm}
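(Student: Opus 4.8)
The plan is to prove the two implications separately and to base both on the standard dictionary between hyperbolicity of metric balls and the structure of asymptotic cones. Throughout I fix a non-principal ultrafilter $\mathcal U$ and recall that for a sequence $(d_i)$ of scaling factors with $d_i\to\infty$ the asymptotic cone $\mathrm{Con}^{\mathcal U}(G,(d_i))$ is the $\mathcal U$-ultralimit of the rescaled Cayley graphs $(G,\frac{1}{d_i}\mathrm{dist}_X)$, and that a geodesic space is a real tree if and only if it is $0$-hyperbolic. The one quantitative fact driving everything is that if geodesic triangles inside a ball of radius $\rho$ are $\delta$-thin, then after rescaling by $1/d$ they are $(\delta/d)$-thin; so a scale $d$ with $\delta=o(d)$ and $d=o(\rho)$ turns nearly-thin triangles into genuine tripods in the limit.

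For the ``if'' direction, assume $G=\varinjlim G_i$ with the stated properties and write $r_i=IR_{S_i}(\e_i)$, so that $\delta_i=o(r_i)$. Since $\e_i(S_i)=S_{i+1}$, the images of all the $S_i$ in $G$ coincide with a single finite generating set, and the canonical map $G_i\to G$ is injective on a ball of radius comparable to $r_i$ (an element of small norm that survives $\e_i$ also survives all later maps, because the subsequent relations are even longer). I would then choose $d_i$ with $\delta_i=o(d_i)$ and $d_i=o(r_i)$, for instance $d_i=\sqrt{\delta_ir_i}$. Any geodesic triangle in $\mathrm{Con}^{\mathcal U}(G,(d_i))$ is an ultralimit of triangles in $G$ whose vertices sit in a ball of radius $O(d_i)=o(r_i)$; on such a ball the metric of $G$ agrees with that of $G_i$, so the triangle is $\delta_i$-thin, hence $(\delta_i/d_i)$-thin after rescaling. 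Since $\delta_i/d_i\to0$ the limiting triangle is a tripod, the cone is a real tree, and $G$ is lacunary hyperbolic.

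For the ``only if'' direction, assume $\mathrm{Con}^{\mathcal U}(G,(d_i))$ is a real tree. Fixing a finite generating set $X$ and writing $G=F(X)/\langle\langle R\rangle\rangle$, I would approximate $G$ by the groups $G_i=F(X)/\langle\langle R_i\rangle\rangle$, where $R_i\subseteq R$ consists of the relators of length at most a threshold $n_i$ chosen so that $\bigcup_i R_i=R$; taking $S_i$ to be the image of $X$ gives epimorphisms $\e_i\colon G_i\to G_{i+1}$ with $\e_i(S_i)=S_{i+1}$ and $G=\varinjlim G_i$. Three things must then be arranged. First, the tree hypothesis must be transcribed back to the approximating sequence as the statement that, for $\mathcal U$-almost every $i$, geodesic triangles inside balls of radius $\sim d_i$ in $G$ are $o(d_i)$-thin. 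Second, a local-to-global (Cartan--Hadamard type) theorem for hyperbolic spaces must be invoked to conclude that, once the thresholds $n_i$ are large relative to these thinness constants, the truncated group $G_i$ is genuinely hyperbolic, with a hyperbolicity constant $\delta_i$ bounded in terms of the thinness at scale $d_i$. Third, the gaps between successive thresholds must be chosen so lacunary that the new relators imposed in passing from $G_i$ to $G_{i+1}$ are much longer than $\delta_i$, which forces $\e_i$ to be injective on a ball of radius $r_i$ with $\delta_i=o(r_i)$.

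The main obstacle is this ``only if'' direction, and within it the interplay of its last two tasks. Converting the single ultrafilter-level statement ``one cone is a tree'' into an effective scale-by-scale hyperbolicity of balls, and then certifying via the local-to-global principle that a truncated presentation already defines an honestly hyperbolic group, is delicate; but the real tension is that the relators must be short enough to keep each $G_i$ hyperbolic while the jumps $n_i\to n_{i+1}$ must be long enough to guarantee a large injectivity radius. Reconciling these two opposing demands is exactly the point where the ``lacunary'' gap condition $\delta_i=o(r_i)$ is forced, and it is the crux of the proof.
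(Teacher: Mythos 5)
First, a point of reference: the paper does not prove Theorem \ref{oos} at all --- it is imported from \cite[Theorem 1.1]{OOS} --- so your attempt can only be measured against the proof given there. Your outline does reproduce that proof's general architecture (ultralimits of $\delta_i$-hyperbolic spaces with $\delta_i=o(d_i)$ are real trees in one direction; truncated presentations plus a quantitative local-to-global argument at sparse scales in the other), but it contains a genuine gap in the direction you treat as routine. In the ``if'' direction, your parenthetical justification --- ``an element of small norm that survives $\e_i$ also survives all later maps, because the subsequent relations are even longer'' --- does not follow from the hypotheses. The condition $\delta_i=o\bigl(IR_{S_i}(\e_i)\bigr)$ imposes no monotonicity on the injectivity radii $r_i$ and no lower bound on the lengths of elements killed later: it is perfectly consistent with the hypotheses that $r_{i+1}\ll d_i\ll r_i$ (say $\delta_i$ and $r_i$ oscillate, with $\delta_i=o(r_i)$ holding along the whole sequence), and then $\e_{i+1}$ kills elements whose length is just above $2r_{i+1}$, which is far below your scale $d_i=\sqrt{\delta_i r_i}$. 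In that situation the ball of radius $O(d_i)$ in $G_i$ does not inject into $G$, the metrics of $G$ and $G_i$ disagree at scale $d_i$, and the identification of triangles in $G$ with $\delta_i$-thin triangles in $G_i$ --- the heart of your argument --- collapses. The step is repairable (e.g., using that injectivity of $\e_j$ on the ball of radius $r_j$ actually forces $\e_j$ to be \emph{isometric} on that ball, one restricts to the subsequence of indices at which $r_i$ realizes the infimum of its tail, where $\delta_i=o\bigl(\inf_{j\ge i}r_j\bigr)$ still holds and the limit map $G_i\to G$ is isometric on the relevant ball, and builds the scaling sequence from that subsequence), but as written it is false. A smaller point in the same direction: thinness of ultralimit triangles only controls geodesics arising as limits; to conclude the cone is a tree one should pass through the four-point formulation of hyperbolicity, which survives ultralimits with constant $o(1)$ and characterizes real trees among geodesic spaces.

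In the ``only if'' direction you have a plan rather than a proof, as you yourself acknowledge. The two pillars --- converting ``one cone over one ultrafilter is a tree'' into effective $o(d_i)$-thinness of balls at scale $d_i$ for $\omega$-almost all $i$, and a quantitative Cartan--Hadamard theorem producing hyperbolicity constants $\delta_i$ for the truncated groups $G_i$ that feed a lower bound on $IR_{S_i}(\e_{i+1})$ --- are precisely the content of the proof in \cite{OOS}, not background that can be cited away. Moreover, your linear ordering (first hyperbolicity of $G_i$, then the injectivity radius) conceals a circularity: the local-to-global theorem needs hyperbolicity of balls of the truncated group $G_i$, whereas the tree hypothesis a priori gives information only about balls of $G$; identifying the two up to the relevant radius is essentially the injectivity-radius estimate you are trying to derive. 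Breaking this circle through the explicit constants in the local-to-global theorem is the crux, so your attempt stops exactly where the real proof begins.
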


The construction of the lacunary hyperbolic group in Theorem \ref{hyp} is based on the techniques developed in \cite{SCT}, where the methods suggested by Gromov \cite{Gro} and elaborated by Olshanskii \cite{Ols} for hyperbolic groups are generalized to relatively hyperbolic groups. We heavily rely on results from \cite{AMO,RHG,SCT} and refer to these papers for more details.

Let $K$ be a group hyperbolic relative to a collection of subgroups $\Hl $. A subgroup $L\le K$ is {\it suitable} if it is not virtually cyclic, is not conjugate to a subgroup of one of $H_\lambda $'s, and does not normalize any nontrivial finite normal subgroup of $K$. This definition is different from the original one from \cite{SCT}, but is equivalent to the latter by \cite[Proposition 3.4]{AMO}. Parts (a)-(d) of the following theorem are proved in \cite[Theorem 2.4]{SCT}. The last part follows immediately from \cite[Lemma 5.1]{SCT} and the proof of \cite[Theorem 2.4]{SCT}.

\begin{thm}\label{glue}
Let $K$ be a group hyperbolic relative to a collection of
subgroups $\Hl $, $L$ a suitable subgroup of $K$, $t_1, \ldots
, t_m$ arbitrary elements of $K$, and $S\subseteq K$ a finite subset. Then there exists an epimorphism
$\eta \colon K\to \overline{K}$ such that:
\begin{enumerate}
\item[(a)] The restriction of $\eta $ to $\bigcup\limits_{\lambda\in \Lambda
}H_\lambda $ is injective.

\item[(b)] The group $\overline{K}$ is hyperbolic relative to $\{ \eta
(H_\lambda ) \} _{\lambda \in \Lambda } $.

\item[(c)] For any $i=1, \ldots , m$, we have $\eta (t_i)\in \eta (H)$.

\item[(d)] $\eta (L)$ is a suitable subgroup of $\overline{K}$.

\item[(e)] The restriction of $\eta $ to $S$ is injective.
\end{enumerate}
\end{thm}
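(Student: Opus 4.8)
The statement is exactly of the type handled by the group-theoretic small cancellation machinery over relatively hyperbolic groups, so the plan is to realize $\eta$ as the quotient map $K\to \overline K=K/\langle R_1,\ldots ,R_m\rangle ^K$ for a judiciously chosen finite family of relators $R_1,\ldots ,R_m$, and then read off (a)--(e) from the known properties of such quotients. The design principle is that the single relator $R_i$ should have the form $R_i=t_if_i$ with $f_i\in L$; then in $\overline K$ one gets $\eta (t_i)=\eta (f_i)^{-1}\in \eta (L)$, which is precisely conclusion (c) (the distinguished subgroup absorbing the $t_i$ being the image of the suitable subgroup $L$).

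First I would produce the relators. Since $L$ is suitable it is not virtually cyclic, not parabolic, and does not normalize a nontrivial finite normal subgroup; standard facts about relatively hyperbolic groups then supply inside $L$ a pair of independent loxodromic elements, and from their large powers one builds, for each $i$, an element $f_i\in L$ whose geodesic representative is a long alternating product of such loxodromic syllables. Choosing the powers large, one arranges that the symmetrized closure of $\{R_i=t_if_i\}$ satisfies the small cancellation condition $C(\e ,\mu ,\rho )$ relative to $\Hl $ in the sense of \cite{SCT}: no two distinct relators share a long common subword, no relator overlaps its own shifts by much, and no relator contains a long subword lying in a single peripheral coset. The common length is also taken to exceed the word lengths of the $t_i$ and of every element of the finite set $S$.

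With such relators in hand, the main small cancellation theorem over relatively hyperbolic groups yields that $\overline K$ is hyperbolic relative to $\{\eta (H_\lambda )\}$, giving (b), and that the restriction of $\eta $ to each $H_\lambda$, hence to $\bigcup_\lambda H_\lambda$, is injective, giving (a). Conclusion (e) follows from the injectivity-radius estimate \cite[Lemma 5.1]{SCT}: because the relators are longer than the diameter of $S$, no nontrivial consequence of the relations can identify two distinct elements of $S$, so $\eta |_S$ is injective. For (d), the same quotient theory shows that the image of a suitable subgroup remains suitable---$\eta (L)$ is again non-elementary, non-parabolic, and does not normalize a finite normal subgroup---since the small cancellation condition prevents the added relations from collapsing the geometry of $L$.

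The crux of the argument, and the place where genuine care is required, is the construction of the relators in the second step. One must verify the relative small cancellation conditions \emph{quantitatively}: the interaction between the arbitrary prefixes $t_i$ and the controlled suffixes $f_i\in L$ must not create long pieces, and one must rule out long subwords running through a peripheral subgroup, which would obstruct relative hyperbolicity of the quotient. This is exactly the content of \cite[Theorem 2.4]{SCT} for parts (a)--(d); part (e) is the additional injectivity-on-$S$ refinement obtained by combining \cite[Lemma 5.1]{SCT} with the proof of that theorem, as indicated in the text.
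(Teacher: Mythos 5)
Your proposal is correct and follows essentially the same route as the paper, which itself gives no independent argument but simply cites \cite[Theorem 2.4]{SCT} for parts (a)--(d) and \cite[Lemma 5.1]{SCT} together with the proof of that theorem for part (e). Your reconstruction of the underlying small cancellation argument (relators $t_if_i$ with $f_i\in L$ built from independent loxodromic elements, with the quantitative relative small cancellation conditions delegated to \cite{SCT}) matches the cited machinery, and you correctly read $\eta(H)$ in part (c) as $\eta(L)$.
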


\begin{prop}\label{lh}
There exists a lacunary hyperbolic group $G$ such that every hyperbolic group embeds in $G$.
\end{prop}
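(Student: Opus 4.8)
The plan is to build $G$ as the direct limit of a sequence of \emph{ordinary} hyperbolic groups to which Theorem~\ref{oos} applies, using Theorem~\ref{glue} at each step to graft on one more hyperbolic group while preserving all the ones grafted on before. Since hyperbolic groups are finitely presented, there are only countably many of them up to isomorphism; fix an enumeration $Q_1, Q_2, \ldots$ of representatives of all isomorphism classes. I will construct epimorphisms
$$
K_0 \stackrel{\eta_1}{\longrightarrow} K_1 \stackrel{\eta_2}{\longrightarrow} K_2 \longrightarrow \cdots
$$
of finitely generated groups so that, for every $n\ge 0$: $K_n$ is generated by the images $a_n,b_n$ of two fixed letters with $\eta_{n+1}(\{a_n,b_n\})=\{a_{n+1},b_{n+1}\}$; $K_n$ is hyperbolic relative to a collection of subgroups isomorphic to $Q_1,\dots,Q_n$; and each of these peripheral subgroups is mapped injectively by every subsequent $\eta_m$. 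The group $G:=\varinjlim K_n$ will be the required example.

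For the base case take $K_0=\langle a_0,b_0\rangle$ free of rank $2$ with empty peripheral collection. Suppose $K_{n-1}$ is built. As $K_{n-1}$ is hyperbolic relative to copies of $Q_1,\dots,Q_{n-1}$, the free product $\widetilde K=K_{n-1}\ast Q_n$ is hyperbolic relative to $\{Q_1,\dots,Q_{n-1},Q_n\}$, is generated by $a_{n-1},b_{n-1}$ together with the generators $t_1,\dots,t_m$ of the factor $Q_n$, and the factor $K_{n-1}$ is a \emph{suitable} subgroup of $\widetilde K$ (it is non-elementary; being the complementary free factor it is not conjugate into any of the peripheral subgroups $Q_1,\dots,Q_n$; and the non-elementary free product $\widetilde K$ has no nontrivial finite normal subgroup). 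Apply Theorem~\ref{glue} to $\widetilde K$ with suitable subgroup $L=K_{n-1}$, distinguished elements $t_1,\dots,t_m$, and finite set $S$ containing the ball of radius $R_n$ about $1$ in $K_{n-1}$ (for an integer $R_n$ fixed below). The resulting map $\eta_n\colon\widetilde K\to K_n$ is injective on all peripheral subgroups by (a), so $Q_1,\dots,Q_n$ all embed in $K_n$; it sends each $t_i$ into $\eta_n(K_{n-1})$ by (c), so $K_n=\langle a_n,b_n\rangle$ with $a_n,b_n$ the images of $a_{n-1},b_{n-1}$; it keeps $\eta_n(K_{n-1})=K_n$ suitable by (d), which is what lets $K_n$ serve as the suitable subgroup at the next stage; and it is injective on $S$ by (e). Restricting to $K_{n-1}\le\widetilde K$ gives the epimorphism $\eta_n\colon K_{n-1}\to K_n$.

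Two facts finish the proof. First, each $K_n$ is genuinely hyperbolic: it is hyperbolic relative to finitely many subgroups $Q_1,\dots,Q_n$ that are themselves hyperbolic, and a group hyperbolic relative to finitely many hyperbolic subgroups is hyperbolic. So $K_n$ is $\delta_n$-hyperbolic with respect to $S_n=\{a_n^{\pm1},b_n^{\pm1}\}$, and Theorem~\ref{oos} becomes applicable. Second, by (e) the map $\eta_{n+1}$ is injective on the ball of radius $R_{n+1}$ in $K_n$, whence $IR_{S_n}(\eta_{n+1})\ge R_{n+1}$; choosing $R_{n+1}$ at the $(n+1)$-st stage (after $\delta_n$ is already determined) so large that $R_{n+1}\ge (n+1)\delta_n$, we obtain $\delta_n=o(IR_{S_n}(\eta_{n+1}))$. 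Thus $(K_n,\eta_n)$ satisfies the hypotheses of Theorem~\ref{oos}, so $G=\varinjlim K_n$ is lacunary hyperbolic. Finally every $Q_j$ sits as a peripheral subgroup of $K_j$ and is mapped injectively by all later $\eta_m$ via (a), hence embeds in $G$; since every hyperbolic group is isomorphic to some $Q_j$, every hyperbolic group embeds in $G$.

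The main obstacle is the tension between the two pictures the argument must maintain simultaneously. Theorem~\ref{glue} is a tool about \emph{relatively} hyperbolic groups, and the control it provides---injectivity on peripheral subgroups, suitability of $L$, and the reduction of generators via (c)---lives in the relative category; keeping all the inserted $Q_j$ permanently peripheral is exactly what guarantees, through (a), that their embeddings survive to the limit. Theorem~\ref{oos}, on the other hand, demands that the terms of the sequence be \emph{absolutely} hyperbolic with a growing gap between hyperbolicity constant and injectivity radius. Reconciling the two hinges on the observation that the peripheral subgroups are themselves hyperbolic, so that each $K_n$ is at once relatively hyperbolic (for the purpose of the next surgery) and absolutely hyperbolic (for the purpose of the lacunary criterion). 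The remaining verifications---that $K_{n-1}$ is suitable in each free product and that no finite normal subgroups appear---are routine but must be tracked carefully through the induction.
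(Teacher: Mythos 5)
Your proposal is correct and follows essentially the same route as the paper: enumerate the hyperbolic groups, form the free product with the next one at each stage, apply Theorem~\ref{glue} with the previous group as the suitable subgroup and a large ball as the finite set $S$, and invoke Theorem~\ref{oos} via the injectivity-radius estimate. The only differences are cosmetic (the paper takes the ball of radius $2^{\delta_n}$ rather than $(n+1)\delta_n$, and it spells out slightly less of the suitability verification than you do).
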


\begin{proof}
Since every hyperbolic group is finitely presented, we can enumerate all hyperbolic groups: $H_0=\{ 1\},  H_1, H_2, \ldots \,$ We set $G_0=F(x,y)$, where $F(x,y)$ is the free group freely generated by $x$ and $y$, and construct a sequence of groups and epimorphisms
\begin{equation}\label{seq}
G_0\stackrel{\e_1}\to G_1\stackrel{\e_2}\to
\end{equation}
by induction. Suppose that $G_n$ is already constructed. We use the same notation for elements $x,y$ their images in $G_n$. Assume that the groups $H_0, \ldots , H_n$ properly embed in $G_n$, so we may think of them as subgroups of $G_n$.  Suppose also that $G_n$ is not virtually cyclic, contains no nontrivial finite normal subgroups, and is hyperbolic relative to the collection $\{ H_0, \ldots , H_n\} $. In particular, $G_n$ is hyperbolic by \cite[Corollary 2.41]{RHG}. These conditions obviously hold for $G_0$.

Let $\delta _n$ be the hyperbolicity constant of $G_n$ with respect to the generating set $\{ x, y\} $, $S_n$ the subset of all elements of $G_n$ of length at most $2^{\delta _n}$ with respect to $\{ x, y\} $. Let also $K=G_n\ast H_{n+1}$. It follows from the definition of relative hyperbolicity via isoperimetric functions (see \cite{RHG}) that $K$ is hyperbolic relative to $\{ H_0, \ldots , H_{n+1}\}$. Standard arguments show that $L=G_n$ is a suitable subgroup of $K$. Let $t_1, \ldots , t_k$ be generators of $H_{n+1}$.

We denote by $G_{n+1}$ the quotient group of $K$ that satisfies the conclusion of Theorem \ref{glue}. By part (a) of Theorem \ref{glue} we may think of $H_0, \ldots ,H_{n+1}$ as subgroups of $G_{n+1}$. Further by part (b) $G_{n+1}$ is hyperbolic relative to $\{H_0, \ldots, H_{n+1}\} $. By part (c) the images of $t_i$'s in $G_{n+1}$ belong to the image of $G_n$. Thus $G_{n+1}$ is generated by $\{ x,y\} $ and there is a natural epimorphism $\e_{n+1}\colon G_n\to G_{n+1}$. Note that by part (e) the natural homomorphism $K\to G_{n+1} $ is injective on $S_n$. This implies that
\begin{equation}\label{ir}
IR_{\{ x,y\} }(\e_{n+1}) \ge 2^{\delta _n}
\end{equation}
Finally by part (d) $G_{n+1}$ is a suitable subgroup of itself. In particular, it is non-elementary and contains no nontrivial finite normal subgroups. This completes the inductive step.

Let now $G$ be the direct limit of (\ref{seq}). That is, let $N_i={\rm Ker } (\e _n\circ \cdots \circ \e _1)$, $i=1,2, \ldots $, and $G=G_0/\bigcup\limits_{i=1}^\infty N_i$. Then $G$ is lacunary hyperbolic by (\ref{ir}) and Theorem \ref{oos}. It is clear from our construction that every hyperbolic group embeds in $G$.
\end{proof}

\begin{proof}[Proof of Theorem \ref{hyp}]
1) It easily follows from results of \cite{Ols} that every hyperbolic group is residually torsion. In fact, every hyperbolic group $G$ is even residually torsion of bounded exponent, that is, $\bigcap_{n=1}^\infty G^n =\{ 1\} $, where $G^n$ is the subgroup generated by $\{ g^n\mid g\in G\}$. The later result is proved in \cite{IO}.

Let $N_i=\bigcap_{n=1}^i G^n $. Then the sequence $\{ N_i\}$ satisfies the first condition from Lemma \ref{mal} for $\mathcal C=\mathcal T$. Hence $\t (\mathcal T)\subseteq \t (G)$ for every hyperbolic group $G$. Consequently $\t (\mathcal T)\subseteq \t(\mathcal H)$. On the other hand we have $\t(\mathcal H)\subseteq \t (\mathcal F)$ as every finite group is hyperbolic. Therefore, $\t (\mathcal H)$ is undecidable by Lemma \ref{slo}.

2) Let $G$ be the group from Proposition \ref{lh}. Since every hyperbolic group embeds in $G$, we have $\t (G)\subseteq \t (\mathcal H)$. On the other hand, $\t (\mathcal H)\subseteq \t (G)$ for every lacunary hyperbolic group $G$ by Lemma \ref{mal} and Theorem \ref{oos}. Thus $\t (G)=\t (\mathcal H)$.
\end{proof}

\vspace{1cm}

\noindent \small  Stevenson Center 1326, Department of
Mathematics, Vanderbilt University\\  Nashville, TN 37240, USA

\vspace{3mm}

\noindent \small \it E-mail address: \tt
denis.osin@gmail.com

\end{document}